\makeatletter\@addtoreset{figure}{section}\makeatother
\makeatletter\@addtoreset{table}{section}\makeatother
\newtheorem{theorem}{Theorem}[]
\newtheorem{prop}[theorem]{Proposition}
\newtheorem{cor}[theorem]{Corollary}
\newcommand{\del}{\partial}
\newcommand{\calC}{\mathcal C}
\newcommand{\RR}{\mathbb R}
\newcommand{\op}[1]{\!\!\mathop{\rm ~#1}\nolimits}
\begin{document}

\title[Complete Hamiltonian manifolds]{${\rm L}^2$\--cohomology and complete Hamiltonian manifolds}

\author{}
\author{Rafe Mazzeo${}^1$\,\,\,\,\,\,\,\, \'Alvaro Pelayo${}^2$\,\,\,\,\,\,\,\, Tudor S. Ratiu${}^3$}

\addtocounter{footnote}{1} 
\footnotetext{ Partially supported by NSF grant DMS-1105050}

\addtocounter{footnote}{1} 

\footnotetext{ Partially 
supported by  NSF Grant DMS-0635607,
 NSF CAREER Award  1055897, Spanish Ministry Grant MTM 2010-21186-C02-01, and the MSRI 2010-11 program \textit{Symplectic and Contact
Geometry and Topology}.
\addtocounter{footnote}{1} }

\footnotetext{Partially supported by the government grant of the 
Russian Federation for support of research projects implemented by 
leading scientists, Lomonosov Moscow State University, under  
agreement No. 11.G34.31.0054, by  Swiss NSF grant 200021-140238,
and the 2010-11 MSRI program \textit{Symplectic and Contact
Geometry and Topology}.
\addtocounter{footnote}{1} }

\date{}

\maketitle

\begin{abstract}
A classical theorem of Frankel for compact K\"ahler manifolds states that
a K\"ahler $S^1$-action is Hamiltonian if and only if it has fixed points.
We prove a metatheorem which says that when Hodge theory holds on 
non-compact manifolds,
then Frankel's theorem still holds. Finally, we present several concrete situations
in which the assumptions of the metatheorem hold. 
\end{abstract}

\section{The Classical Frankel Theorem} \label{intro}

An $S^1$-action on  a symplectic manifold $(M,\omega)$ is \emph{Hamiltonian} if there exists a smooth map, the \emph{momentum  map}, 
$$\mu \colon M \to (\mathfrak{s}^1)^* \simeq \mathbb{R}$$ into the dual 
$(\mathfrak{s}^1)^*$ of the Lie algebra $\mathfrak{s}^1 \cong \mathbb{R}$ 
of $S^1$,  such that $$\mathbf{i}_{\xi_M}\omega : = \omega(\xi_M, \cdot) = 
\mathbf{d} \mu,$$ for some generator $\xi$ of $\mathfrak{s}^1$, that is, the $1$-form $\mathbf{i}_{\xi_M}\omega$ is exact. 
Here $\xi_M$ is the vector field on $M$ whose flow is given by $\mathbb{R} \times M \ni (t,m) \mapsto e^{\rm it \xi} 
\cdot m \in M$, where the dot denotes the $S^1$-action on $M$. If $(M,\omega)$ is connected, compact and 
\emph{K\"ahler}, the following result of T. Frankel is well-known: 
\\
\vspace{-2.5mm}
\\
{\bf Frankel's Theorem} (\cite{Frankel1959}).
 \emph{Let $M$ be a compact connected K\"ahler
manifold admitting an $S^1$-action  preserving the K\"ahler
structure. If the $S^1$-action has fixed points, then the
action is Hamiltonian.}

\medskip

This theorem generalizes in various ways; for example, the $S^1$-action may be replaced by a $G$-action, where $G$
is any compact Lie group and the K\"ahler structure may be weakened to a symplectic structure. The purpose of this 
paper is to generalize Frankel's theorem to certain noncompact complete Riemannian manifolds. More specifically, 
we describe a set of hypotheses under which the proof in the compact case can be generalized. This relies on
the existence of a Hodge decomposition on $1$-forms.

\section{Hodge Decomposition implies Frankel's Theorem} \label{fs}
Let $(M,\omega)$ be a symplectic manifold. The triple $(\omega,g,\mathbf{J})$ is a \emph{compatible triple} on $(M,\omega)$ 
if $g$ is a Riemannian metric and $\mathbf{J}$ is an almost complex structure such that $g(\cdot,\cdot)=\omega(\cdot,\mathbf{J}\cdot)$. 
Denote by ${\rm d}V_g$ the measure associated to the Riemannian volume.

Let $G$ be a connected Lie group with Lie algebra $\mathfrak{g}$ acting on $M$ by symplectomorphisms, i.e., diffeomorphisms 
which preserve the symplectic form. We refer to $(M,\, \omega)$ as a \emph{symplectic $G$\--manifold}. Any element 
$\xi \in \mathfrak{g}$ generates a vector field $\xi_M$ on $M$, called the \emph{infinitesimal generator}, given by 
$$\xi_M(x):= \left.\frac{{\rm d}}{{\rm d}t}\right|_{t=0}  \op{exp}(t\xi)\cdot x.$$ 

The  $G$-action on $(M,\omega)$  is said to be \emph{Hamiltonian} if there exists a smooth 
equivariant map $\mu  \colon M \to \mathfrak{g}^*$, called the \emph{momentum map}, such that for
all $\xi \in \mathfrak{g}$ we have  $$\mathbf{i}_{\xi_M} \omega : = \omega(\xi_M, \cdot) = \mathbf{d} \langle \mu, 
\xi \rangle,$$ where $\left\langle \cdot , \cdot \right\rangle : \mathfrak{g}^\ast \times \mathfrak{g} 
\rightarrow \mathbb{R}$ is the duality pairing. For example, if $G\simeq (S^1)^k$, $k \in \mathbb{N}$, is a torus, the existence
of such a map $\mu$ is equivalent to the exactness of the one-forms $\mathbf{i}_{\xi_M}\omega$ for all $\xi \in \mathfrak{g}$. 

In this case the obstruction of the action to being Hamiltonian lies in the first de Rham cohomology
group of $M$. The simplest example of a $S^1$-Hamiltonian action is rotation of the sphere $S^2$ about the polar axis. 
The flow lines of the infinitesimal generator defining this action are the latitude circles. 

Denote by ${\rm L}^2_\lambda$ the Hilbert space of square integrable functions relative 
to a given measure ${\rm d}\lambda$ on $M$, and write the associated norm either on functions or $1$-forms as 
$\|\cdot \|_{{\rm L}^2_\lambda}$. This measure determines a formal 
adjoint $\delta_\lambda$ of the de Rham differential.
A ${\rm L}^2_\lambda$ $1$-form $\omega$ is called \emph{$\lambda$\--harmonic} if it is in the common null space
of $\mathbf{d}$ and $\delta_\lambda$. 

\begin{theorem}\label{thm_gen} 
Let $G$ be a compact connected Lie group acting on the symplectic 
manifold $(M,\omega)$, with $(\omega, g, \mathbf{J})$ a $G$-invariant 
compatible triple. Suppose, in addition, that 
${\rm d}\lambda = f {\rm d}V_g$ 
is a $G$-invariant measure on $M$, where $f$ is 
smooth and bounded. Suppose that $\|\xi_M\|_{{\rm L}^2_\lambda} < \infty$ for all $\xi\in \mathfrak{g}$. Assume that every smooth 
closed $1$-form $\omega$ in ${\rm L}^2_ \lambda$ decomposes as an 
${\rm L}^2_ \lambda$-orthogonal sum 
$\mathbf{d} f + \chi$, where $f, \mathbf{d}f \in {\rm L}^2_\lambda$, 
 $\chi \in {\rm L}^2_ \lambda$ is $\lambda$-harmonic, 
and that each cohomology class in $H^1(M)$ has a unique 
$\lambda$-harmonic  representative
in ${\rm L}^2_ \lambda$. If $\mathbf{J}$ preserves the space of ${\rm L}^2_\lambda$ harmonic one-forms and the $G$-action has fixed points 
on every connected component, then the action is Hamiltonian. 
\end{theorem}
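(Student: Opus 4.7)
\noindent\textit{Proof proposal.}
The plan is to import Frankel's K\"ahler argument step by step, with the hypothesized ${\rm L}^2_\lambda$ Hodge theory standing in for compactness of $M$. Fix $\xi \in \mathfrak{g}$ and set $\alpha_\xi := \mathbf{i}_{\xi_M}\omega$. Since $\mathcal{L}_{\xi_M}\omega = 0$, Cartan's formula gives $\mathbf{d}\alpha_\xi = 0$. Compatibility of the triple yields the pointwise identity $\alpha_\xi = g(\mathbf{J}\xi_M,\cdot)$, whence $|\alpha_\xi|_g = |\xi_M|_g$; combined with $\|\xi_M\|_{{\rm L}^2_\lambda} < \infty$ and the boundedness of $f$, this places $\alpha_\xi$ in ${\rm L}^2_\lambda$. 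The Hodge hypothesis then furnishes an ${\rm L}^2_\lambda$-orthogonal decomposition $\alpha_\xi = \mathbf{d}h_\xi + \chi_\xi$, with $h_\xi,\mathbf{d}h_\xi \in {\rm L}^2_\lambda$ and $\chi_\xi$ $\lambda$-harmonic.

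The heart of the proof is to show $\chi_\xi \equiv 0$. The flow $\phi_t$ of $\xi_M$ preserves $g$, $\omega$, $\mathbf{J}$, $\lambda$, and $\alpha_\xi$; pulling the decomposition back by $\phi_t$ produces a $\lambda$-harmonic representative of the cohomology class $[\alpha_\xi]$, which by the uniqueness clause must equal $\chi_\xi$. Differentiating in $t$ gives $\mathcal{L}_{\xi_M}\chi_\xi = 0$, and because $\mathbf{J}$ is $G$-invariant and preserves the $\lambda$-harmonic $1$-forms by hypothesis, $\mathbf{J}\chi_\xi$ is a closed form satisfying $\mathcal{L}_{\xi_M}(\mathbf{J}\chi_\xi)=0$. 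Cartan's formula then yields
\[
\mathbf{d}\bigl((\mathbf{J}\chi_\xi)(\xi_M)\bigr) = \mathcal{L}_{\xi_M}(\mathbf{J}\chi_\xi) - \mathbf{i}_{\xi_M}\mathbf{d}(\mathbf{J}\chi_\xi) = 0,
\]
so the smooth function $(\mathbf{J}\chi_\xi)(\xi_M)$ is locally constant; the hypothesis that every connected component of $M$ contains a $G$-fixed point, at which $\xi_M = 0$, then forces it to vanish identically.

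Orthogonality of the Hodge decomposition now reduces $\|\chi_\xi\|^2_{{\rm L}^2_\lambda}$ to $\langle \chi_\xi,\alpha_\xi\rangle_{{\rm L}^2_\lambda}$, and the identity $\alpha_\xi^\sharp = \mathbf{J}\xi_M$ together with the $g$-skew-adjointness of $\mathbf{J}$ rewrite this inner product as $-\int_M (\mathbf{J}\chi_\xi)(\xi_M)\,{\rm d}\lambda$, which vanishes by the previous paragraph. Hence $\chi_\xi = 0$ and $\alpha_\xi = \mathbf{d}h_\xi$, exhibiting $-h_\xi$ as a Hamiltonian for the one-parameter subgroup generated by $\xi$. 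Choosing these primitives linearly in $\xi$ via a basis of $\mathfrak{g}$, and then averaging the resulting map $M\to\mathfrak{g}^*$ over the coadjoint action of the compact group $G$, produces a $G$-equivariant momentum map $\mu$ satisfying $\mathbf{d}\langle\mu,\xi\rangle = \mathbf{i}_{\xi_M}\omega$ for every $\xi \in \mathfrak{g}$.

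The chief obstacle is that every displayed identity above relies on manipulations---Hodge decomposition, orthogonality, convergence of pairings with $\xi_M$, absence of boundary terms---that classically demand compactness of $M$. The theorem's hypotheses are tailored to legitimize each step in the noncompact setting: the ${\rm L}^2_\lambda$-orthogonal decomposition substitutes for integration by parts, uniqueness of the $\lambda$-harmonic representative delivers the $\phi_t$-invariance of $\chi_\xi$, the bound $\|\xi_M\|_{{\rm L}^2_\lambda}<\infty$ together with boundedness of $f$ keeps all pairings finite, and the $\mathbf{J}$-invariance hypothesis makes $\mathbf{J}\chi_\xi$ available as a closed form. The real mathematical content, deferred to the subsequent sections, lies in verifying these Hodge-theoretic hypotheses for concrete classes of complete noncompact Hamiltonian manifolds.
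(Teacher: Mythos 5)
Your proposal is correct and follows essentially the same route as the paper: flow-invariance plus uniqueness of the ${\rm L}^2_\lambda$ harmonic representative to get $\mathcal L_{\xi_M}\chi_\xi=0$, the fixed-point hypothesis to force the locally constant function $(\mathbf J\chi_\xi)(\xi_M)$ to vanish, orthogonality of the Hodge decomposition together with $\mathbf J$-invariance of the harmonic space to conclude $\chi_\xi=0$, and then linearity in $\xi$ plus averaging over $G$ for equivariance. (Only cosmetic differences: the paper proves the invariance and vanishing statements for an arbitrary ${\rm L}^2_\lambda$ harmonic $1$-form and then specializes to $\chi^\xi$, whereas you work directly with $\chi_\xi$ and $\mathbf J\chi_\xi$; also the boundedness of the density $f$ is not needed to place $\mathbf i_{\xi_M}\omega$ in ${\rm L}^2_\lambda$, since $|\mathbf i_{\xi_M}\omega|_g=|\xi_M|_g$ pointwise.)
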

\begin{proof}
The proof extends Frankel's method \cite{Frankel1959}.  For clarity, we divide the proof into several steps.
\medskip

\noindent \textbf{Step 1} (Infinitesimal invariance of $\lambda$-harmonic $1$-forms). We show first that if $\alpha \in 
\Omega^1(M)$ is harmonic and $\| \alpha\|_{{\rm L}^2_\lambda} < \infty$, then $\mathcal L_{\xi_M} \alpha = 0$.
This is standard in the usual setting, but requires checking here since we have that $\delta_\lambda \alpha = 0$ rather than 
$\delta \alpha = 0$. 

If $\varphi$ is an isometry of $(M,g)$ and preserves the measure $\operatorname{d}\!\lambda$, then 
\begin{equation}
\label{useful_formula}
\varphi^\ast \left(\left\langle \! \left\langle \nu, \rho \right\rangle \! \right\rangle \operatorname{d}\!\lambda\right) = 
\left\langle \! \left\langle \varphi ^\ast \nu, \phi^\ast \rho \right\rangle \! \right\rangle
\operatorname{d}\!\lambda \nonumber
\end{equation}
for any $\nu, \rho \in \Omega^1(M)$, where $\left\langle \! \left\langle \nu, \rho \right\rangle \! \right\rangle $ dentes
the pointwise inner product of $\nu$ and $\rho$ on $M$. 

Next, denote by $\Phi: G \times M \rightarrow M$ the $G $-action and $F_t := \Phi_{\exp(t \xi)} $ the flow of $\xi_M$. 
Since $\mathbf{d} \alpha = 0 $ it follows that $$\mathbf{d}F  _t^\ast \alpha = F _t^\ast \mathbf{d}\alpha = 0.$$
In addition, since $F_t^\ast$ commutes with $\lambda$, we also have 
$$\delta_\lambda F_t^\ast \alpha = F_t^\ast \delta_\lambda \alpha = 0.$$
Hence if $\alpha$ is harmonic, then so is $F _t^\ast \alpha$.

However, because $F_t$ is isotopic to the identity, 
$$
[F_t^\ast \alpha] = F_t^\star[ \alpha] = [\alpha]
$$ 
in $\op{H}^1(M, \mathbb{R})$, where $F_t^\star$ is the map on cohomology
induced by $F_t$.
This implies that $F _t^\ast \alpha = \alpha$ since this cohomology class contains only one harmonic representative.
Taking the $t $-derivative yields 
$$
{\mathcal L}_{ \xi_M} \alpha =0,
$$ 
as required.
\smallskip

\noindent\textbf{Step 2} (Using the existence of fixed points). Define 
$$\xi_M^\flat: = g( \xi_M, \cdot ) \in \Omega^1(M).$$ 
If $\alpha\in \Omega^1(M)$ is harmonic and $\| \alpha\|_{ {\rm L}^2_{ \lambda}} < \infty$,
it follows from Step 1 that $$0 = {\mathcal L}_{\xi_M} \alpha = \mathbf{i}_{\xi_M} \mathbf{d} \alpha + 
\mathbf{d}\mathbf{i}_{\xi_M} \alpha = \mathbf{d}\mathbf{i}_{\xi_M} \alpha.$$ Thus $\alpha(\xi_M)$ is constant on 
each connected component of $M$. Now, $\xi_M$ vanishes on the fixed point set of $G$, and each component
of $M$ contains at least one such point. Thus $\alpha(\xi_M) \equiv 0$ on $M$, whence 
\begin{equation}
\label{vanishing}
\left\langle \xi_M^\flat, \alpha \right\rangle_{\op{L}^2_\lambda}
= \varint_M \alpha(\xi_M) \operatorname{d}\! \lambda = 0 \nonumber
\end{equation}
for any harmonic one-form $\alpha$ satisfying $\| \alpha\|_{{\rm L}^2_{ \lambda}} < \infty$. 
\smallskip

\noindent\textbf{Step 3} (Hodge decomposition). 
Since $$\mathbf{d}\mathbf{i}_{\xi_M} \omega= {\mathcal L}_{\xi_M}\omega = 0$$ and 
$\|\mathbf{i}_{\xi_M} \omega\|_{{\rm L}^2_{\lambda}} < \infty$, our hypothesis implies that  
\begin{equation}
\label{Hodge_in_this_case}
\mathbf{i}_{\xi_M} \omega = \mathbf{d} f^ \xi + \chi^ \xi, \nonumber
\end{equation}
where $f^ \xi \in \op{C}^{\infty}(M) $, $\chi^ \xi \in \Omega^1(M)$ is $\lambda$-harmonic and 
$\|\mathbf{d} f ^\xi\|_{{\rm L}^2_{ \lambda}}, \|\chi^\xi\|_{{\rm L}^2_{ \lambda}} < \infty$. 

We now prove that $\chi^\xi=0$. If $ \alpha \in \Omega^1(M) $ is any harmonic one-form with 
$\|\alpha\|_{{\rm L}^2_{ \lambda}} < \infty$, then 
\begin{equation*}
\left\langle \mathbf{i}_{\xi_M} \omega, \alpha \right\rangle_{{\rm L}^2_ \lambda}
= \left\langle \xi_M^\flat, \mathbf{J}\alpha \right\rangle_{\op{L}^2_\lambda} = 0
\end{equation*}
by Step 2 since $\mathbf{J} \alpha$ is harmonic (by the hypotheses of
the theorem). In particular,  since $$\xi_M^\flat =  \mathbf{J}\mathbf{d}f^\xi + 
\mathbf{J}\chi^\xi$$ and $\xi_M^\flat$ is also orthogonal to the first term on the right, we conclude that $\chi^\xi = 0$. 

The conclusion of this step is $$\mathbf{i}_{\xi_M} \omega = \mathbf{d}f^ \xi $$ for 
any $\xi\in \mathfrak{g}$; note that both sides of this identity are linear in $\xi$. 
\smallskip

\noindent\textbf{Step 4} (Equivariant momentum map). Using a basis $\{e_1, \ldots, e_r\}$ of $\mathfrak{g}$, we 
define $\mu:M \rightarrow \mathfrak{g}^\ast$ by 
$$
\mu^ \xi := \xi^1f^{e_1}+ \cdots + \xi^rf^{e_r}, \quad \mbox{where}\quad \xi= \xi^1e_1+ \cdots + \xi^re_.
$$ 
Clearly, $$\mathbf{i}_{\xi_M} \omega = \mathbf{d}\mu^ \xi,$$ so $\mu$  is a momentum map of the $G $-action. 
Since $G$ is compact, one can average $\mu$ in the standard way (see, e.g., 
\cite[Theorem 11.5.2]{MaRa2003}) to obtain an equivariant momentum map. 
This completes the proof of the theorem.
\end{proof}

\section{Applications}
We now discuss several different criteria which ensure that the results of the last section can be applied. 
The first is the classical setting of `unweighted' ${\rm L}^2$ cohomology, which is the cohomology of
the standard Hilbert complex of ${\rm L}^2$ differential forms on a complete Riemannian manifold. The
existence of a strong Kodaira decomposition is known in many instances, and we present a few
examples.  We then discuss two other criteria, the first by Ahmed and Stroock and the second by 
Gong and Wang, which allow one to prove a similar strong Kodaira decomposition for forms
which are in ${\rm L}^2$ relative to some weighted measure. We present some examples to which these
criteria apply. Finally, we recall some well-known facts
about the Hodge theory on spaces with `fibered boundary' geometry; these include asymptotically
conical spaces, as well as the important classes of ALE/ALF/... gravitational instantons. Many
of these spaces admit circle actions. 

\subsection{Unweighted ${\rm L}^2$ cohomology} 
The nature of the Kodaira decomposition and ${\rm L}^2$ Hodge theory on a complete manifold relative 
to the standard volume form is now classical. An account may be found in de Rham's book \cite{dR}; 
see also \cite{Car2002}. 
\begin{theorem}
\label{GpTr}
If $(M^n,g)$ is a complete Riemannian manifold and $0 \leq k \leq n$, then the  following conditions are equivalent:
\begin{itemize}
\item[(i)] ${\rm Im}(\mathbf{d} \delta + \delta\mathbf{d})=
(\mathcal{H}_2^k(M))^{\perp}$;
\item[(ii)] There is an ${\rm L}^2$-orthogonal decomposition
\[
{\rm L}^2(M,\Lambda^k)={\rm Im}\, \mathbf{d}\oplus {\rm Im} \, \delta \oplus \mathcal{H}_2^k(M);
\]
\item[(iii)] ${\rm Im}\, \mathbf{d}$ and ${\rm Im}\, \delta$ are closed in ${\rm L}^2(M,\Lambda^k)$;
\item[(iv)] The quotients
\[
\overline{ \mathrm{ran}\, \mathbf{d}} \big/ \mathrm{ran}\, \mathbf{d} = 0
\]
in ${\rm L}^2(M, \Lambda^k)$ and ${\rm L}^2(M, \Lambda^{n-k})$. 
\end{itemize}
If the smooth form $\alpha \in \Omega^k(M)$ decomposes as 
$\mathbf{d}\beta + \delta_{\mu_g} \gamma + \chi$, then 
$\beta\in\Omega^{k-1}(M)$, $\gamma \in\Omega^{k+1}(M)$, and 
$\chi\in \Omega^k(M) \cap \mathcal{H}^2_k(M)$ are all smooth.
\end{theorem}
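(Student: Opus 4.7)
The plan is to build everything on the \emph{weak} Kodaira decomposition, which on any complete Riemannian manifold delivers the ${\rm L}^2$-orthogonal splitting
\[
{\rm L}^2(M,\Lambda^k) = \overline{\operatorname{ran}\mathbf{d}} \oplus \overline{\operatorname{ran}\delta_{\mu_g}} \oplus \mathcal{H}_2^k(M).
\]
Completeness enters precisely to force $\mathbf{d}$ and $\delta_{\mu_g}$, viewed as graph-closures from $C^\infty_c$, to be mutually Hilbert-space adjoints, and hence $\overline{\operatorname{ran}\mathbf{d}}^\perp = \ker\delta_{\mu_g}$, $\overline{\operatorname{ran}\delta_{\mu_g}}^\perp = \ker\mathbf{d}$, and $\mathcal{H}_2^k = \ker\mathbf{d}\cap\ker\delta_{\mu_g}$. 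I would invoke this foundational decomposition from \cite{dR} or \cite{Car2002} rather than reproving it.

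The equivalences (ii)$\Leftrightarrow$(iii) and (iii)$\Leftrightarrow$(iv) are then formal. The first is immediate: the weak splitting becomes an honest closed orthogonal sum exactly when each of its two non-harmonic summands is closed. For the second, Banach's closed-range theorem applied to the adjoint pair $(\mathbf{d},\delta_{\mu_g})$ equates closedness of $\operatorname{ran}(\mathbf{d}\colon\Omega^{k-1}\to\Omega^k)$ with closedness of $\operatorname{ran}(\delta_{\mu_g}\colon\Omega^k\to\Omega^{k-1})$, and the Hodge-star isometry $\ast\colon{\rm L}^2(\Omega^k)\to{\rm L}^2(\Omega^{n-k})$, which intertwines $\mathbf{d}$ with $\pm\delta_{\mu_g}$, matches closedness at degree $n-k$ with closedness at degree $k+1$. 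Together these identify ``closed range of $\mathbf{d}$ in degrees $k$ and $n-k$'' with ``$\operatorname{ran}\mathbf{d}$ and $\operatorname{ran}\delta_{\mu_g}$ both closed in ${\rm L}^2(\Omega^k)$''.

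The substantive step is (iii)$\Rightarrow$(i), where I would establish the strengthening $\operatorname{ran}\mathbf{d} = \operatorname{ran}(\mathbf{d}\delta_{\mu_g})$ and $\operatorname{ran}\delta_{\mu_g} = \operatorname{ran}(\delta_{\mu_g}\mathbf{d})$ at degree $k$. Given $\phi = \mathbf{d}\alpha\in\operatorname{ran}\mathbf{d}$, I discard the $\ker\mathbf{d}$-component of $\alpha$ to arrange $\alpha\in(\ker\mathbf{d})^\perp = \overline{\operatorname{ran}\delta_{\mu_g}}$ inside $\Omega^{k-1}$; Banach's theorem promotes this closure to an honest range (using the closedness at degree $k$ from (iii)), so $\alpha = \delta_{\mu_g}\gamma$, and $\gamma$ may be further chosen in $(\ker\delta_{\mu_g})^\perp = \overline{\operatorname{ran}\mathbf{d}} = \operatorname{ran}\mathbf{d}$. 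Since $\mathbf{d}^2 = 0$ on the graph closure, this last condition forces $\mathbf{d}\gamma = 0$, whence $\Delta\gamma = \mathbf{d}\delta_{\mu_g}\gamma = \phi$. A symmetric construction handles an arbitrary element of $\operatorname{ran}\delta_{\mu_g}$, using Banach+(iii) to obtain closed range of $\mathbf{d}$ in degree $k+1$. Summing the two potentials for the $\operatorname{ran}\mathbf{d}$- and $\operatorname{ran}\delta_{\mu_g}$-projections of any $\phi\perp\mathcal{H}_2^k$ produces a single $\psi\in\operatorname{dom}\Delta$ with $\Delta\psi = \phi$. The converse (i)$\Rightarrow$(iii) is quick: if $\phi\in\overline{\operatorname{ran}\mathbf{d}}$, then $\phi\perp\mathcal{H}_2^k$ and (i) supplies $\psi$ with $\mathbf{d}\delta_{\mu_g}\psi + \delta_{\mu_g}\mathbf{d}\psi = \phi$; orthogonality of $\overline{\operatorname{ran}\mathbf{d}}$ and $\overline{\operatorname{ran}\delta_{\mu_g}}$ forces $\delta_{\mu_g}\mathbf{d}\psi = 0$, so $\phi = \mathbf{d}(\delta_{\mu_g}\psi)\in\operatorname{ran}\mathbf{d}$.

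For the smoothness clause, assume $\alpha\in\Omega^k(M)$ is smooth with decomposition $\mathbf{d}\beta + \delta_{\mu_g}\gamma + \chi$. Interior elliptic regularity for the Hodge Laplacian forces $\chi$ to be smooth from $\Delta\chi = 0$. Since $\alpha-\chi\in\operatorname{ran}\Delta$ by (i), I solve $\Delta\psi = \alpha-\chi$ and conclude again from elliptic regularity that $\psi$ is smooth. Setting $\tilde\beta := \delta_{\mu_g}\psi$ and $\tilde\gamma := \mathbf{d}\psi$ produces smooth primitives whose images $\mathbf{d}\tilde\beta$ and $\delta_{\mu_g}\tilde\gamma$ agree, by uniqueness of the ${\rm L}^2$-orthogonal projections onto $\operatorname{ran}\mathbf{d}$ and $\operatorname{ran}\delta_{\mu_g}$, with $\mathbf{d}\beta$ and $\delta_{\mu_g}\gamma$; this is the assertion. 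The main obstacle is the careful bookkeeping in (iii)$\Rightarrow$(i), tracking how a single-degree closed-range hypothesis propagates via Banach duality to the closedness at adjacent degrees required to invert $\mathbf{d}$ and $\delta_{\mu_g}$ on the orthogonal complements of their kernels.
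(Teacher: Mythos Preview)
The paper does not give its own proof of this theorem; it is recorded as a classical fact with pointers to de~Rham's book \cite{dR} and Carron's survey \cite{Car2002}. Your reconstruction is the standard argument and is correct: you build on the weak Kodaira decomposition (rightly cited rather than reproved), reduce (ii)$\Leftrightarrow$(iii)$\Leftrightarrow$(iv) to Banach's closed-range theorem together with the Hodge-star duality, carry out the bootstrap (iii)$\Rightarrow$(i) by successively trimming primitives into the orthogonal complements of the relevant kernels, and handle the smoothness clause by interior elliptic regularity for $\Delta$. One small clarification worth making explicit in your write-up: as you implicitly note, $\beta$ and $\gamma$ are not uniquely determined by the decomposition (only $\mathbf{d}\beta$ and $\delta_{\mu_g}\gamma$ are), so the smoothness assertion should be read as the existence of smooth choices of $\beta$ and $\gamma$---which is exactly what your construction via $\psi$ delivers.
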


Theorems \ref{thm_gen} and \ref{GpTr} imply the following result.
\begin{cor}
 \label{nicecor}
Let $G$ be a compact Lie group which acts isometrically on $(M, \omega)$, a $2n$-dimensional complete connected 
K\"ahler manifold, and suppose that any one of the conditions {\rm (i) - (iv)} of Theorem \ref{GpTr} holds. 
If the infinitesimal generators of the action all lie in $\op{L}^2_{\omega^n}$ and if the $G$-action 
has fixed points, then it is Hamiltonian.
\end{cor}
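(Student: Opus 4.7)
My plan is to verify the hypotheses of Theorem~\ref{thm_gen} in the setting of the corollary, taking ${\rm d}\lambda := \omega^n$, and then invoke that theorem. Since $(M,\omega,g,\mathbf{J})$ is K\"ahler, the Riemannian volume form equals $\omega^n/n!$, so ${\rm d}\lambda = n!\,{\rm d}V_g$; the weight $f\equiv n!$ is constant, hence smooth and bounded, and the associated formal adjoint $\delta_\lambda$ coincides with the standard codifferential $\delta$ of $g$. The $G$-invariance of ${\rm d}\lambda$ follows from that of $\omega$, and the hypothesis on infinitesimal generators is precisely $\|\xi_M\|_{{\rm L}^2_\lambda}<\infty$ for all $\xi\in\mathfrak{g}$.

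The heart of the argument is extracting from Theorem~\ref{GpTr} the decomposition demanded by Theorem~\ref{thm_gen}. The assumed condition delivers the ${\rm L}^2$-orthogonal splitting
\[
{\rm L}^2(M,\Lambda^1) = {\rm Im}\,\mathbf{d}\oplus{\rm Im}\,\delta\oplus\mathcal{H}^1_2(M),
\]
together with the smoothness of each component of a smooth form. Given a smooth closed $\alpha\in{\rm L}^2_\lambda$, I would write $\alpha=\mathbf{d}\beta+\delta\gamma+\chi$; then $\delta\gamma$ is closed (since $\alpha$, $\mathbf{d}\beta$, $\chi$ all are), and the integration by parts
\[
\|\delta\gamma\|^2_{{\rm L}^2_\lambda} = \langle \gamma,\mathbf{d}\delta\gamma\rangle_{{\rm L}^2_\lambda} = 0,
\]
valid because $\gamma$ and $\delta\gamma$ lie in ${\rm L}^2$ and $M$ is complete, forces $\delta\gamma=0$. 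Hence $\alpha=\mathbf{d}\beta+\chi$ as required. Uniqueness of the ${\rm L}^2$-harmonic representative of a de~Rham class reduces to the vanishing of any ${\rm L}^2$-harmonic 1-form that is de~Rham exact: the difference of two representatives is both in $\mathcal{H}^1_2$ and equal to $\mathbf{d}\eta$ for some smooth $\eta$, and a further orthogonality/integration-by-parts argument enabled by completeness (together with the closed-range assertion in Theorem~\ref{GpTr}) produces zero.

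The last hypothesis, that $\mathbf{J}$ preserves ${\rm L}^2$-harmonic 1-forms, is automatic on a K\"ahler manifold: $\mathbf{J}$ is parallel with respect to the Levi-Civita connection, hence commutes with the Hodge Laplacian $\mathbf{d}\delta+\delta\mathbf{d}$, and acts pointwise isometrically on $\Lambda^1 T^*M$, so ${\rm L}^2_\lambda$ is preserved. The hypothesis of a fixed point on every connected component is vacuous beyond the stated existence of fixed points, since $M$ is connected. Applying Theorem~\ref{thm_gen} then yields that the $G$-action is Hamiltonian. I expect the main obstacle to lie in the second paragraph: one must translate the ${\rm L}^2$-Hilbert-complex statement of Theorem~\ref{GpTr} into the refined statement about de Rham classes that Theorem~\ref{thm_gen} requires, with careful use of completeness to justify the integrations by parts.
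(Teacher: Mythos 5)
Your overall strategy coincides with the paper's: take ${\rm d}\lambda=\omega^n$ (a constant multiple of ${\rm d}V_g$, so $\delta_\lambda=\delta$), feed Theorem~\ref{GpTr} into Theorem~\ref{thm_gen}, and use the K\"ahler condition to see that $\mathbf{J}$ preserves ${\rm L}^2$-harmonic $1$-forms --- the paper's proof consists essentially of this last remark alone. Your reduction of the three-term Kodaira splitting to $\alpha=\mathbf{d}\beta+\chi$ is correct and is a detail the paper omits: since $\mathbf{d}\delta\gamma=0$ and $\gamma,\delta\gamma\in{\rm L}^2$, the Gaffney integration by parts on a complete manifold gives $\|\delta\gamma\|^2=\langle\gamma,\mathbf{d}\delta\gamma\rangle=0$.

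The gap is exactly where you suspected it: uniqueness of the ${\rm L}^2$-harmonic representative within a de~Rham class. The difference of two representatives is $\mathbf{d}\eta$ with $\eta$ merely a smooth function --- nothing places $\eta$ in ${\rm L}^2$ --- so the integration by parts $\|\mathbf{d}\eta\|^2=\langle\eta,\delta\mathbf{d}\eta\rangle$ is not available, and the closed-range condition does not rescue it: conditions (i)--(iv) identify $\mathcal{H}^1_2(M)$ with the (reduced) ${\rm L}^2$-cohomology, not with a subspace of $H^1(M,\mathbb{R})$. In fact the statement ``exact and ${\rm L}^2$-harmonic implies zero'' is false on complete manifolds satisfying (i)--(iv): on the hyperbolic plane, where the Laplacian on $1$-forms has closed range by McKean's spectral gap, the form $\mathbf{d}x$ in the disk model is a nonzero ${\rm L}^2$-harmonic $1$-form whose de~Rham class vanishes; its primitive $x$ is bounded but not in ${\rm L}^2_{hyp}$, which is precisely why your integration by parts fails. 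So the hypothesis of Theorem~\ref{thm_gen} that each class in $H^1(M)$ has a \emph{unique} $\lambda$-harmonic ${\rm L}^2$ representative does not follow from (i)--(iv) by any soft orthogonality argument; one needs an actual injectivity statement for $\mathcal{H}^1_2(M)\to H^1(M,\mathbb{R})$, or at least control of the primitives of the specific exact forms $F_t^\ast\alpha-\alpha$ arising in Step~1 of Theorem~\ref{thm_gen}. To be fair, the paper's one-sentence proof is silent on this point as well, but your proposed argument as written does not close this step.
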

The only point to note is that since $M$ is K\"ahler, the complex structure $\mathbf{J}$ 
preserves the space of harmonic forms \cite[Cor 4.11, Ch. 5]{Wells2008}. 

\subsection{Examples}
There are many common geometric settings where the result above applies. We recall a few of these here.

\medskip

\noindent{\bf Conformally compact manifolds:}  A complete manifold $(M,g)$ is called conformally
compact if $M$ is the interior of a compact manifold with boundary $\bar{M}$ and $g$ can be written
as $\rho^{-2} \bar{g}$, where $\rho$ is a defining function for $\del \bar{M}$ (i.e., $\del\bar{M} = \{\rho = 0\}$
and ${\rm d} \rho \neq 0$ there) and $\bar{g}$ is a metric
on $\bar{M}$ which is non-degenerate and smooth up to the boundary.
The sectional curvatures of $g$ become isotropic near any point $p \in \del \bar{M}$, with common
value $-|{\rm d}\rho|^2_{\bar{g}}$. If this value is constant along the entire boundary, then $(M,g)$ is called
asymptotically hyperbolic. 

An old well-known result \cite{Maz-Hodge} states that if $n = \dim M \neq 3$ (automatic if $M$
is symplectic), then the conditions of Theorem \ref{GpTr} are satisfied when $k = 1$, and hence 
Corollary \ref{nicecor} holds. There are now much simpler proofs of this result; see \cite{CarSurvey}. 

As explained in \cite{CarSurvey}, the conditions of Theorem \ref{GpTr} are invariant under
quasi-isometry, which means that we obtain a similar result for any symplectic manifold quasi-isometric 
to a conformally compact space.  This allows us, in particular, to 
substantially relax the regularity conditions on $\rho$ and $\bar{g}$ in this definition.

There is an interesting generalization of this to the set of complete edge metrics. The geometry here is
a bit more intricate; as before, $M$ is the interior of a smooth manifold with boundary. Now, however,
the boundary $\del \bar{M}$ is assumed to be the total space of a fibration over a compact smooth
manifold $Y$ with compact fiber $F$. We can use local coordinates $(x,y,z)$ near a point of the boundary 
where $x$ is a boundary defining function, $y$ is a set of coordinates on $Y$ lifted to $\del \bar{M}$
and then extended inward, and $z$ is a set of functions which restrict to coordinates on each fiber $F$. 
A metric $g$ on $M$ is called a \textit{complete edge metric} if in each such local coordinate system it takes the form 
\begin{multline*}
g = \frac{\mathbf{d}x^2 + \sum a_{0 \alpha}(x,y,z) \mathbf{d}x\mathbf{d}y_\alpha + \sum a_{\alpha \beta} a_{\alpha \beta}(x,y,z) \mathbf{d}y_\alpha \mathbf{d}y_\beta}{x^2} \\
+ \sum b_{0\mu} \frac{\mathbf{d}x}{x}\mathbf{d}z_\mu + b_{\alpha \mu} \frac{\mathbf{d}y_\alpha}{x} \mathbf{d}z_\mu + 
b_{\mu \nu} \mathbf{d}z_\mu \mathbf{d}z_\nu. 
\end{multline*}
The prototype is the product $X \times F$ where $X$ is a conformally compact manifold and $F$ is a compact smooth
manifold, or more generally, a manifold which fibers over a neighborhood of infinity in a conformally compact space $X$ 
with compact smooth fiber $F$.

The analytic techniques developed in \cite{Maz-edge} generalize those in \cite{Maz-Hodge} and show that
if $(M,g)$ is a space with a complete edge metric, and if $\dim Y \neq 2$, then the Hodge Laplace
operator on $1$-forms is closed. 

\medskip

\noindent{\bf Surfaces of revolution:}  
A Riemannian surface $(M,g)$ which admits an isometric $S^1$ action must be a surface of revolution, hence in
polar coordinates, 
\[
g = \mathbf{d}r^2 + f(r)^2 \mathbf{d}\theta^2,
\] 
where $\theta \in S^1$ and either $f > 0$ on $(0,\infty)$ and is a function of $r^2$ (i.e., its Taylor expansion
near $r=0$ has only even terms) which vanishes at $r=0$, or else $f$ is strictly positive on all of $\mathbb R$.  
In the first case, $M \cong \mathbb R^2$, while in the second, $M \cong S^1 \times \mathbb R$. 

The symplectic form is $\omega = f(r) \mathbf{d}r \wedge \mathbf{d}\theta$, so the action is generated by the vector field 
$\del_\theta$. Since $\mathbf{i}_{\partial/\partial\theta}\omega=
-f(r)\mathbf{d}r$, one of the basic hypotheses becomes
\begin{equation}
\|\mathbf{i}_{\del_\theta}\omega \|^2_{\op{L}^2_{\omega}}  
= \varint_{0}^{2\pi}\varint_{0}^{\infty}f(r)^3\op{d}\!r\op{d}\!\theta  =2\pi \varint_0^\infty f(r)^3 \op{d}\!r < \infty.
\label{normdtheta}
\end{equation}
\begin{prop}\cite[Theorem 1.2]{Tr2009} If $M \cong \RR^2$ and $f \leq C r^{-k}$ for some $k > 1/3$, then the range of the Hodge Laplace
operator on $1$-forms is closed.
\label{Tro}
\end{prop}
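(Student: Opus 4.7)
Plan: I would exploit the $S^1$-symmetry of $(M,g)$ to reduce the closed-range question to one-dimensional weighted estimates. By Theorem~\ref{GpTr}, closed range of the Hodge Laplacian on $1$-forms is equivalent to closedness of $\operatorname{Im}\mathbf{d}$ and $\operatorname{Im}\delta$ in $\op{L}^2(\Omega^1)$. On this $2$-manifold Hodge $\ast$-duality intertwines $\mathbf{d}$ and $\delta$ up to sign, so the problem reduces to showing that $\mathbf{d}\colon\op{L}^2(\Omega^0)\to\op{L}^2(\Omega^1)$ and $\mathbf{d}\colon\op{L}^2(\Omega^1)\to\op{L}^2(\Omega^2)$ have closed range. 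Because the $S^1$-action is isometric and commutes with $\mathbf{d}$, Fourier decomposition in the angular variable $\theta$ respects these operators, and I only need to verify closed range on each Fourier sector separately.

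For the nonzero Fourier modes I would extract a spectral gap from the angular derivative. Writing $u = u_n(r)e^{in\theta}$ with $n\neq 0$, a direct computation yields
\begin{equation*}
\|\mathbf{d}u\|_{\op{L}^2}^2 \geq 2\pi n^2\int_0^\infty |u_n(r)|^2 f(r)^{-1}\,\mathbf{d}r \geq \frac{n^2}{f_{\max}^2}\|u\|_{\op{L}^2}^2,
\end{equation*}
where $f_{\max} = \sup_{r\geq 0} f(r)$ is finite because $f$ is continuous with $f(0)=0$ and $f(r)\to 0$ as $r\to\infty$. An analogous pointwise estimate handles $\mathbf{d}\colon\op{L}^2(\Omega^1)\to\op{L}^2(\Omega^2)$ on the nonzero modes. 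In both cases this spectral gap forces closed range on the $n\neq 0$ sector of $\op{L}^2$.

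The substantive work is therefore confined to the zero Fourier mode, where $\mathbf{d}$ reduces to $\partial_r$ acting between weighted one-dimensional $\op{L}^2$ spaces on $(0,\infty)$: from $\op{L}^2(f\,\mathbf{d}r)$ to itself (for functions paired with $\mathbf{d}r$), and from $\op{L}^2(f^{-1}\mathbf{d}r)$ to itself (for the $\mathbf{d}r$-component of $1$-forms paired with the area form). Closed range in each of these $1$D problems can be established via weighted Hardy-type inequalities, together with the appropriate boundary behavior at $r=0$ enforced by the smoothness condition that $f$ is a smooth odd function of $r$ vanishing at the origin.

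The main obstacle is precisely this weighted $1$D analysis in the tail $r\to\infty$, where $f\to 0$ and the weights $f$, $f^{-1}$ degenerate in opposite directions, so neither a naive Poincar\'e nor a direct Hardy argument suffices. The hypothesis $f\leq Cr^{-k}$ with $k>1/3$ is the precise power-law decay under which the relevant weighted Hardy inequalities (and their adjoint counterparts) close up. It is worth noting that this is the same critical exponent that makes $\|\mathbf{i}_{\del_\theta}\omega\|^2_{\op{L}^2_\omega} = 2\pi\int_0^\infty f(r)^3\,\mathbf{d}r$ finite, as in~\eqref{normdtheta}; the proposition therefore dovetails exactly with the $\op{L}^2$-integrability hypothesis on the infinitesimal generator required in Theorem~\ref{thm_gen}.
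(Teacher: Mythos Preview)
The paper does not supply its own proof of this proposition: it is quoted verbatim from Troyanov \cite[Theorem~1.2]{Tr2009} and used as a black box. There is therefore nothing in the present paper to compare your argument against.

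As to your outline itself: the Fourier decomposition and the treatment of the nonzero modes are sound. Boundedness of $f$ (which follows from continuity, $f(0)=0$, and the decay hypothesis) does give a uniform spectral gap on each sector $n\neq 0$, exactly as you write. The reduction of the zero mode to the two one-dimensional weighted problems $\partial_r$ on $\op{L}^2(f\,\mathbf{d}r)$ and on $\op{L}^2(f^{-1}\mathbf{d}r)$ is also correct.

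The gap is that you stop precisely where the content lies. You identify the zero-mode Hardy analysis as ``the main obstacle'' and then assert, without computation, that $k>1/3$ is ``the precise power-law decay under which the relevant weighted Hardy inequalities \dots\ close up.'' Your only evidence is the numerical coincidence with the finiteness of $\int_0^\infty f^3\,\mathbf{d}r$ in \eqref{normdtheta}. That coincidence is suggestive but is not an argument: the Hardy/closed-range thresholds for $\partial_r$ on $\op{L}^2(r^{-k}\mathbf{d}r)$ and on $\op{L}^2(r^{k}\mathbf{d}r)$ over a neighbourhood of infinity are governed by quite different exponents, and neither one is obviously $1/3$. To turn your plan into a proof you would have to carry out this weighted analysis explicitly (or invoke the specific Muckenhoupt-type criteria Troyanov uses) and show where $k>1/3$ actually enters; as written, the proposal is a plausible roadmap rather than a proof.
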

With these hypotheses, we can then apply Corollary \ref{nicecor} as before.  

It is worth contrasting Proposition \ref{Tro} with the well-known criterion of McKean \cite{McK}. This states that if $(M^2,g)$ 
is simply connected \emph{and} has Gauss curvature $K_g \leq -1$, then the ${\rm L}^2$ spectrum of the Laplacian on functions 
is contained in $[1/4, \infty)$. The spectrum of the Laplacian on $2$-forms is the same, and using a standard Hodge-theoretic 
argument, the spectrum of the Laplacian on $1$-forms is contained in $\{0\} \cup [1/4,\infty)$. Thus this curvature
bound would also guarantee the conclusion of Theorem \ref{GpTr}.  Now, $$K_g = - f''(r)/f(r) \leq -1$$ is the same as 
$$f''(r) \geq f(r).$$ Using this and the initial condition $f(0)=0$, it is not hard to show that $f$ must grow exponentially 
as $r \to \infty$, so that \eqref{normdtheta} cannot hold.  In other words, McKean's condition is useless for our
purposes. 

Of course, if the hypotheses of Proposition \ref{Tro} hold, then we do not need to apply these Hodge-theoretic
arguments since the momentum map of this circle action is given by any function $\mu(r)$ satisfying 
$$\mu'(r) = -f(r).$$ 

\medskip

\noindent{\bf Compact stratified spaces:}   Although it is outside the framework of complete manifolds, there is another
class of spaces to which these results may be applied. These are the smoothly stratified spaces with iterated edge metrics.
These include, at the simplest level, spaces with isolated conic singularities or simple edge singularities. More general
spaces of this type are obtained recursively, by using spaces such as these as cross-sections of cones, and these
cones can vary over a smooth base.  Hodge theory on such spaces was first considered by Cheeger \cite{Cheeger};
the recent papers \cite{ALMP1}, \cite{ALMP2} provide an alternate approach and generalize the spaces to allow ones
for which it is necessary to impose boundary conditions along the strata.  A complete Hodge theory is available, 
cf.\ the papers just cited.  One important way that such spaces might arise in our setting is if the group $G$ acts symplectically 
on a compact smooth manifold $M'$, but 
$G$ commutes with the symplectic action by another group $K$.  Then the action of $G$ descends to 
the quotient $M = M'/K$, and this latter space typically has precisely the stratified structure and iterated
edge metric as described above .

\subsection{Ahmed-Stroock conditions.}  
Under certain rather weak requirements on the geometry of $(M,g)$ and an auxiliary measure 
$${\rm d}\lambda = e^{-U} {\rm d}V_g,$$
Ahmed and Stroock \cite[\S6]{AhSt2000} have proved a Hodge-type decomposition. In the theorem below and the rest of
the paper, $\Delta f : = \operatorname{div} \nabla f = 
- \delta \mathbf{d} f$ is 
the usual Laplacian on functions and $\nabla^2f : = \operatorname{Hess} f$
is the Hessian of $f$, i.e., the second covariant derivative of $f$.

\begin{theorem}[\cite{AhSt2000}] Assume that $(M,g)$ is complete and
\begin{itemize} 
\item[$\bullet$]  $ \mathrm{Ric}_g \geq - \kappa_1$; 
\item[$\bullet$] the curvature operator is bounded above, i.e., 
$\left\langle \! \left\langle R \alpha, \alpha \right\rangle \! \right\rangle 
\leq \kappa_2 \| \alpha\|_{ {\rm L}^2}$ for all $\alpha \in \Omega^2(M) $, where $\kappa_1, \kappa_2 \geq 0$.  
\end{itemize}
Suppose further that $U$ is a smooth nonnegative proper function on $M$ which satisfies
\begin{itemize}
\item $\Delta U \leq C(1+U) $ and $\|\nabla U\|^2 \leq C\operatorname{e}^{ \theta U}$ for some $C<\infty$ and $\theta\in (0,1)$;
\item $\varepsilon U^{1+ \varepsilon} \leq 1+\| \nabla U\|^2$ for some $\varepsilon > 0$;
\item $\left\langle \! \left\langle v, (\nabla^2 U)(v) \right\rangle \! \right\rangle \geq -B\|v\|^2$ for every $x \in M $ and 
$v \in \operatorname{T}_xM$, where $B<\infty$. 
\end{itemize} 
Write $\delta_\lambda$ for the adjoint of $\mathbf{d}$ relative to 
${\rm d}\lambda = e^{-U} {\rm d}V_g$ and ${\rm L}^2_\lambda$ for the 
associated Hilbert space. Note that since $U\geq 0$, $\lambda$ is bounded.

Then 
\begin{enumerate}
\item[{\rm (1)}] \cite[Theorem 5.1]{AhSt2000} There is a strong Hodge decomposition on $1$-forms. In particular, if 
$\alpha \in {\rm L}^2_\lambda \Omega^1 \cap \calC^\infty$ is closed, then $\alpha = \mathbf{d}f + \chi $, where 
$f \in {\rm L}^2_\lambda \cap \calC^\infty$ and $\chi \in \mathcal H_\lambda^1$.
\item[{\rm (2)}] \cite[Theorem 6.4]{AhSt2000} Each class $[\alpha]\in \operatorname{H} ^1(M, \mathbb{R})$ has a unique 
representative in $\mathcal H_\lambda^1$. 
\end{enumerate}
\label{AScond}
\end{theorem}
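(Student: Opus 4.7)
The plan is to realize the weighted Hodge Laplacian $\Delta_\lambda := \mathbf{d}\delta_\lambda + \delta_\lambda \mathbf{d}$ as a non-negative self-adjoint operator on $\op{L}^2_\lambda \Omega^1(M)$ and then, using the Bakry--\'Emery Bochner identity together with a Persson-type argument driven by the confinement hypothesis on $U$, force discrete spectrum; closed range and the Hodge decomposition then follow automatically.

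First, I would establish essential self-adjointness of $\Delta_\lambda$ on $\calC^\infty_c \Omega^1(M)$. Completeness of $(M,g)$ permits distance-based cutoffs $\eta_k \nearrow 1$ with $\|\nabla \eta_k\|_\infty \to 0$; the growth controls $\Delta U \leq C(1+U)$ and $\|\nabla U\|^2 \leq C e^{\theta U}$ with $\theta < 1$ are exactly the slack needed to make the weighted commutator $[\Delta_\lambda, \eta_k]$ negligible in $\op{L}^2_\lambda$. Integration by parts against the weight then yields the weighted Bochner--Weitzenb\"ock identity
\[
\left\langle \Delta_\lambda \alpha, \alpha \right\rangle_{\op{L}^2_\lambda}
= \|\nabla \alpha\|^2_{\op{L}^2_\lambda} + \int_M (\mathrm{Ric}_g + \nabla^2 U)(\alpha^\sharp, \alpha^\sharp) \, \mathrm{d}\lambda,
\]
with the Bakry--\'Emery tensor controlled below by the Ricci and Hessian hypotheses, while the upper bound on the curvature operator plays the dual role of controlling $\|\mathbf{d}\alpha\|_{\op{L}^2_\lambda}$. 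In particular $\ker \Delta_\lambda = \mathcal H_\lambda^1$, the space of smooth $\lambda$-harmonic $1$-forms in $\op{L}^2_\lambda$.

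To extract closed range I would pass through the ground state transformation $\alpha \mapsto e^{U/2}\alpha$, which conjugates $\Delta_\lambda$ into a perturbation of the unweighted Hodge Laplacian of the form $\Delta + W$, where the scalar part of the potential $W$ is $\tfrac{1}{4}\|\nabla U\|^2 - \tfrac{1}{2}\Delta U$ plus zeroth-order terms controlled by $\nabla^2 U$. The confining hypothesis $\varepsilon U^{1+\varepsilon} \leq 1+\|\nabla U\|^2$ together with $\Delta U \leq C(1+U)$ force $W \to \infty$ at infinity, so Persson's principle applied to the vector-valued operator yields empty essential spectrum. Hence $\Delta_\lambda$ has discrete spectrum, $\mathcal H_\lambda^1$ is finite-dimensional, and the strong orthogonal decomposition $\op{L}^2_\lambda \Omega^1 = \mathrm{Im}\,\mathbf{d} \oplus \mathrm{Im}\,\delta_\lambda \oplus \mathcal H_\lambda^1$ holds. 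Elliptic regularity of $\Delta_\lambda$ as a smooth second-order operator promotes the summands of any smooth closed $\alpha \in \op{L}^2_\lambda$ to smooth forms, completing part~(1).

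Part~(2) splits into injectivity and surjectivity of the natural map $\mathcal H_\lambda^1 \to \op{H}^1(M,\mathbb{R})$. Injectivity is direct: if $\chi = \mathbf{d} h \in \mathcal H_\lambda^1$, then $\delta_\lambda \mathbf{d} h = 0$, and integration by parts against $h$ itself (admissible because $\lambda(M)<\infty$ lets us normalize $h$ into $\op{L}^2_\lambda$, with boundary terms killed by the same cutoff family) forces $\mathbf{d} h = 0$. Surjectivity is the main obstacle: starting from an arbitrary smooth de Rham representative $\alpha$, one uses the properness of $U$ and a cutoff-and-correct procedure on the exhaustion $\{U \leq t\}$ to produce a cohomologous smooth representative bounded in $g$-norm, hence in $\op{L}^2_\lambda$ since $\lambda(M)<\infty$; part~(1) then produces a harmonic element in the same class. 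Calibrating the cutoffs so that essential self-adjointness, the Persson/Bochner discrete spectrum argument, and the bounded-representative construction all close in the same $\op{L}^2_\lambda$ norm is the technical heart of the proof, and is precisely what the finely tuned Ahmed--Stroock hypotheses on $U$, $\nabla U$, and $\nabla^2 U$ are designed to enable.
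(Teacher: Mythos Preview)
The paper does not prove this theorem at all: it is stated as a quotation from Ahmed--Stroock, with the two conclusions attributed explicitly to \cite[Theorem~5.1]{AhSt2000} and \cite[Theorem~6.4]{AhSt2000}. There is therefore no ``paper's own proof'' against which to compare your attempt; the theorem functions in this paper purely as an imported black box feeding into Corollary~\ref{Stroock_conditions}.

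As for the substance of your sketch, it is a coherent spectral-theoretic outline and the individual moves are plausible: the ground-state transform does produce a scalar potential $\tfrac{1}{4}\|\nabla U\|^2 - \tfrac{1}{2}\Delta U$ that tends to infinity (since $\|\nabla U\|^2 \gtrsim U^{1+\varepsilon}$ dominates $\Delta U \leq C(1+U)$, and the Hessian lower bound gives $\Delta U \geq -nB$), and the Ricci and Hessian bounds control the endomorphism part of the Weitzenb\"ock formula on $1$-forms. Two caveats, however. First, Ahmed--Stroock's actual argument is not organized around Persson's principle and discrete spectrum; it proceeds via heat-semigroup and probabilistic estimates, and the upper bound on the curvature operator on $2$-forms is used specifically to control the Hodge Laplacian on $2$-forms (which governs the closedness of $\mathrm{Im}\,\delta_\lambda$ from the other side), a role your sketch does not really account for. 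Second, your surjectivity step in part~(2) --- ``cutoff and correct on $\{U\leq t\}$ to obtain a bounded, hence $\mathrm{L}^2_\lambda$, representative'' --- is the genuinely delicate part of \cite[Theorem~6.4]{AhSt2000}, and as written it is a hope rather than an argument: producing a cohomologous representative with controlled pointwise norm on a noncompact manifold is exactly where the fine balance among the hypotheses on $U$, $\nabla U$, and $\nabla^2 U$ is spent, and it does not follow from soft cutoff manipulations alone.
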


\begin{cor}\label{Stroock_conditions}
Assume that $M$ is symplectic and that $(g,\omega, \mathbf{J})$ are a $G$-invariant compatible triple,
and that $U$ is also $G$-invariant. If the hypotheses of Theorem \ref{AScond} all hold,  
$\mathbf{J}\mathcal{H}^1_\lambda \subset \mathcal{H}^1_\lambda $,  and if the $G$-action has fixed points, 
then it is Hamiltonian.
\end{cor}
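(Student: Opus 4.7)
The proof plan is a direct verification that every hypothesis of Theorem \ref{thm_gen} is supplied by the data of the corollary, so that Theorem \ref{thm_gen} applies and yields the conclusion. This is mostly a bookkeeping exercise, since Theorem \ref{AScond} is designed to provide exactly the Hodge-theoretic input required.

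First I would check the structural requirements on the measure. Writing $\rho := e^{-U}$, the density $\rho$ is smooth, bounded above by $1$ (as $U \geq 0$), and $G$-invariant (as $U$ is $G$-invariant); together with the $G$-invariance of $g$ encoded in the compatible triple, this shows that $\operatorname{d}\lambda = \rho \operatorname{d}V_g$ is of the form, and has the regularity, required by Theorem \ref{thm_gen}.

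Next I would transcribe the Hodge-theoretic hypotheses. Theorem \ref{AScond}(1) gives directly the strong Kodaira decomposition $\alpha = \mathbf{d}h + \chi$ for every closed smooth $\alpha \in L^2_\lambda\Omega^1$, with $h \in L^2_\lambda \cap \calC^\infty$ and $\chi \in \mathcal{H}^1_\lambda$; here $\mathbf{d}h = \alpha - \chi$ automatically lies in $L^2_\lambda$, and the $L^2_\lambda$-orthogonality of $\mathbf{d}h$ and $\chi$ is standard from $\delta_\lambda \chi = 0$. Theorem \ref{AScond}(2) provides uniqueness of the $\lambda$-harmonic representative in each class of $H^1(M, \mathbb{R})$, and the condition $\mathbf{J}\mathcal{H}^1_\lambda \subset \mathcal{H}^1_\lambda$ is assumed explicitly.

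The main, and essentially the only, potential obstacle is the $L^2_\lambda$-integrability of the infinitesimal generators, $\|\xi_M\|_{L^2_\lambda} < \infty$ for all $\xi \in \mathfrak{g}$. This is not among the Ahmed-Stroock hypotheses, and although $e^{-U}$ decays rapidly once $U$ is large, $\|\xi_M\|_g$ need not be bounded, so this finiteness is a genuine compatibility condition between the growth of $U$ and the growth of the $G$-action. I would treat it either as an implicit hypothesis of the corollary or, in concrete examples, verify it from a pointwise growth bound on $\|\xi_M\|_g$ absorbed by $e^{U/2}$. Once this final ingredient is in place, all the hypotheses of Theorem \ref{thm_gen} are satisfied and the $G$-action is Hamiltonian.
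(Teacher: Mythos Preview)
Your approach is exactly what the paper intends: the corollary is stated without proof because it is meant to follow immediately by feeding the conclusions of Theorem~\ref{AScond} into Theorem~\ref{thm_gen}, and your verification of the individual hypotheses (boundedness and $G$-invariance of $e^{-U}$, the Hodge decomposition from part (1), uniqueness of harmonic representatives from part (2), and the explicitly assumed $\mathbf{J}$-invariance) is the right bookkeeping.

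You are also right to flag the condition $\|\xi_M\|_{{\rm L}^2_\lambda}<\infty$: it is genuinely required by Theorem~\ref{thm_gen}, is not a consequence of the Ahmed--Stroock hypotheses alone, and is not listed in the statement of Corollary~\ref{Stroock_conditions}. (Compare Corollary~\ref{nicecor}, where the analogous condition \emph{is} stated explicitly.) Treating it as an implicit standing assumption---to be checked in examples via the decay of $e^{-U}$ against the growth of $\|\xi_M\|_g$---is the correct reading.
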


\subsection{Gong-Wang conditions} There are other conditions, discovered by Gong and Wang,  
which lead to a strong Hodge decomposition. 

\begin{theorem}[\cite{GoWa2004}]
Let $G$ act on the noncompact symplectic manifold $(M, \omega)$, and suppose that $(\omega, g, \mathbf{J})$ 
is a $G$-invariant compatible triple. Assume that ${\rm d}\lambda = e^V {\rm d}V_g$ is also $G$-invariant and has finite 
total mass. Suppose finally that 
\begin{itemize}
\item $\mathrm{Ric} - \operatorname{Hess}(V) \geq -C \mathrm{Id}$; 
\item there exists a positive $G $-invariant proper function $U \in \calC^2(M)$ such 
that $U+V$ is bounded;  
\item $\|\nabla U\| \rightarrow \infty$ as $U \rightarrow \infty $;
\item $\limsup_{U \rightarrow \infty}  \left(\Delta U/\|\nabla U\|^2 \right)< 1$. 
\end{itemize}
Then there is a strong Hodge decomposition on 
${\rm L}^2_\lambda \Omega^1(M)$, as before. 
\end{theorem}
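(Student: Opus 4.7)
The goal is to establish the three properties that together constitute a strong Hodge decomposition on ${\rm L}^2_\lambda\Omega^1(M)$: closed range of $\mathbf{d}$ (and of $\delta_\lambda$), the orthogonal splitting
\begin{equation*}
{\rm L}^2_\lambda\Omega^1(M)=\overline{\mathrm{Im}\,\mathbf{d}}\oplus\overline{\mathrm{Im}\,\delta_\lambda}\oplus\mathcal{H}^1_\lambda,
\end{equation*}
and the existence and uniqueness of a $\lambda$-harmonic representative for each class in $H^1(M,\mathbb{R})$. By the abstract Hilbert-complex formalism underlying Theorem~\ref{GpTr}, all three are equivalent to showing that the weighted Hodge Laplacian $\Delta_\lambda:=\mathbf{d}\delta_\lambda+\delta_\lambda\mathbf{d}$ has closed range on ${\rm L}^2_\lambda\Omega^1(M)$, and this is what I would set out to prove.

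The first step is essential self-adjointness of $\Delta_\lambda$ on $C_c^\infty(\Omega^1)$. Since $U$ is $C^2$, positive, proper, and $\|\nabla U\|\to\infty$ at infinity, the cutoffs $\phi_n:=\chi(U/n)$ with a standard bump $\chi$ satisfy $\phi_n\to 1$ and $|\nabla\phi_n|\to 0$ uniformly, which is exactly what a Chernoff/Shubin-style approximation needs. Next, I would apply the Bakry--\'Emery Bochner identity on 1-forms,
\begin{equation*}
\langle\Delta_\lambda\alpha,\alpha\rangle_{{\rm L}^2_\lambda}=\|\nabla\alpha\|_{{\rm L}^2_\lambda}^{2}+\int_M(\mathrm{Ric}-\mathrm{Hess}\,V)(\alpha^{\sharp},\alpha^{\sharp})\,\mathbf{d}\lambda,
\end{equation*}
so that the curvature hypothesis $\mathrm{Ric}-\mathrm{Hess}\,V\geq -C\,\mathrm{Id}$ instantly gives the semiboundedness $\Delta_\lambda\geq -C$ and realizes $\Delta_\lambda$ as a semibounded self-adjoint operator. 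This step uses none of the other three hypotheses.

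The main obstacle is upgrading this semiboundedness to an actual spectral gap of $\Delta_\lambda$ at $0$, i.e.\ showing $\Delta_\lambda\geq c>0$ on $(\mathcal{H}^1_\lambda)^\perp$. For this I would run an Agmon-type weighted energy estimate: for a candidate low-energy form $\alpha$, test the weak equation $\Delta_\lambda\alpha\simeq\mu\alpha$ against $\psi=e^{2\rho U}\alpha$ with a small $\rho>0$. Integration by parts produces cross-terms of order $\rho\,\Delta U$ and diagonal terms of order $\rho^2\|\nabla U\|^2$, and the drift hypothesis $\limsup_{U\to\infty}\Delta U/\|\nabla U\|^2<1$ is precisely what lets one absorb the former into the latter once $\rho$ is taken small enough, yielding the weighted bound $\int_M e^{2\rho U}|\alpha|^2\,\mathbf{d}\lambda<\infty$. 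The hypothesis that $U+V$ is bounded is what ensures $e^{\rho U}\mathbf{d}\lambda$ actually grows at infinity, so the decay estimate is nontrivial; the finiteness of the total mass of $\lambda$ then converts this decay into Rellich-type compactness on the sublevel sets $\{U\leq n\}$, where $\mathbf{d}\lambda$ and $\mathbf{d}V_g$ are comparable up to bounded factors. Compactness of the resolvent on $(\mathcal{H}^1_\lambda)^\perp$ forces the spectrum there to be discrete and bounded below by a positive constant, and this delivers the closed-range property that, as noted in the first paragraph, implies the full strong Hodge decomposition. The delicate step throughout is the Agmon estimate: choosing $\rho$, tracking the absorption constant against $C$ and the drift ratio, and ensuring the weighted test form $\psi$ lies in the form domain so that the integration by parts is rigorous; the rest of the argument is structural.
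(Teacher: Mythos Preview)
The paper does not prove this theorem: it is quoted from \cite{GoWa2004} and stated without argument, so there is no ``paper's own proof'' to compare your proposal against. What remains is to assess your sketch on its own terms and against what Gong--Wang actually do.

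Your overall architecture---weighted Bochner to get semiboundedness, then a compactness/gap argument on $(\mathcal H^1_\lambda)^\perp$---is the right shape, but two steps are not justified as written. First, the cutoff claim is wrong: with $\phi_n=\chi(U/n)$ one has $|\nabla\phi_n|=|\chi'(U/n)|\,|\nabla U|/n$, and on the set where $\chi'$ is active $U\sim n$, while the hypothesis $\|\nabla U\|\to\infty$ pushes $|\nabla U|$ \emph{up}, not down; nothing in the assumptions bounds $|\nabla U|/U$, so $|\nabla\phi_n|\to 0$ does not follow. Essential self-adjointness of $\Delta_\lambda$ should instead come from completeness of $g$ (standard Chernoff cutoffs in the distance function) together with the lower bound $\Delta_\lambda\ge -C$ you already extracted from Bakry--\'Emery Bochner. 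Second, your use of ``$U+V$ bounded'' is inverted: since $\mathrm d\lambda=e^V\mathrm dV_g\sim e^{-U}\mathrm dV_g$, the weight $e^{\rho U}\mathrm d\lambda\sim e^{(\rho-1)U}\mathrm dV_g$ \emph{decays} for small $\rho$, so it does not ``grow at infinity'' as you assert. The correct reading of this hypothesis is that it identifies $\mathrm d\lambda$ with a Gibbs-type measure $e^{-U}\mathrm dV_g$ up to bounded factors; then the drift condition $\limsup \Delta U/|\nabla U|^2<1$ together with $|\nabla U|\to\infty$ is exactly what makes the ground-state transform $-\Delta+W$, $W=\tfrac14|\nabla U|^2-\tfrac12\Delta U$, have potential tending to $+\infty$ (in the range where the limsup is $<\tfrac12$; in the full range $<1$ one needs the finer semigroup argument of \cite{GoWa2004}), yielding compact resolvent on functions and, via Bochner, the spectral gap on $1$-forms. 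Gong--Wang run this through heat-semigroup/diffusion estimates rather than Agmon weights; your Agmon route could be made to work, but not with the bookkeeping you have now.
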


\begin{cor}\label{GoWa_conditions}
With all notation as above, if $\mathbf{J}$ preserves $\mathcal H^1_\lambda$, and the $G$-action has fixed points, then
it is Hamiltonian.
\end{cor}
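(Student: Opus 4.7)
The plan is to deduce the corollary as a direct application of Theorem \ref{thm_gen}, using the Gong-Wang theorem to supply the Hodge-theoretic input while cross-checking the remaining hypotheses. Writing $f := e^V$, we have ${\rm d}\lambda = f \, {\rm d}V_g$ with $f$ manifestly smooth; boundedness of $f$ follows from the Gong-Wang hypothesis that $U + V$ is bounded, say $U+V \leq C_0$, together with $U \geq 0$, which gives $V \leq C_0$ and hence $e^V \leq e^{C_0}$. The cited Gong-Wang theorem, specialized to one-forms, delivers both the ${\rm L}^2_\lambda$-orthogonal decomposition of a closed smooth ${\rm L}^2_\lambda$ one-form into an exact part ${\rm d}f^\xi$ (with $f^\xi, {\rm d}f^\xi \in {\rm L}^2_\lambda$) and a $\lambda$-harmonic part in ${\rm L}^2_\lambda$, as well as the uniqueness of a $\lambda$-harmonic representative in each class of $H^1(M,\mathbb{R})$. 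The $\mathbf{J}$-invariance of $\mathcal{H}^1_\lambda$ and the existence of $G$-fixed points on every connected component are explicit hypotheses of the corollary.

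The one condition of Theorem \ref{thm_gen} not immediately supplied by the Gong-Wang hypotheses is the integrability $\|\xi_M\|_{{\rm L}^2_\lambda} < \infty$ for every $\xi \in \mathfrak{g}$. Here I would exploit that $U$ is proper and $V \leq C_0 - U$, so $e^V$ decays at least as fast as $e^{-U}$ at infinity; combined with the Ricci lower bound ${\rm Ric} - \operatorname{Hess}(V) \geq -C \, {\rm Id}$ built into the Gong-Wang hypotheses (which controls the growth of geodesic distance and therefore the length of the $G$-generated vector field on a complete manifold) and the compactness of $G$, one obtains $\int_M \|\xi_M\|_g^2 \, e^V \, {\rm d}V_g < \infty$. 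This is the step I expect to require the most care; if the underlying geometry is too wild one may need to impose the integrability as an additional standing hypothesis, in parallel with Corollary \ref{nicecor}.

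Having verified every hypothesis of Theorem \ref{thm_gen}, that theorem applies verbatim. Its Steps 1 and 2 combine the Hodge theory with the fixed-point assumption to show that every $\lambda$-harmonic one-form annihilates $\xi_M$; Step 3 uses the decomposition together with $\mathbf{J}$-invariance of $\mathcal{H}^1_\lambda$ to eliminate the harmonic component of $\mathbf{i}_{\xi_M}\omega$, yielding $\mathbf{i}_{\xi_M}\omega = {\rm d}f^\xi$ for a smooth function $f^\xi$; and Step 4 assembles the $f^\xi$ linearly in a basis of $\mathfrak{g}$ and averages over the compact group $G$ to produce an equivariant momentum map. Consequently the $G$-action is Hamiltonian, which is exactly the conclusion of the corollary.
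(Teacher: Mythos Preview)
Your approach is exactly what the paper intends: the corollary is stated without proof and is meant to follow directly from Theorem~\ref{thm_gen}, with the Gong--Wang theorem supplying the strong Hodge decomposition and uniqueness of $\lambda$-harmonic representatives.

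One point deserves comment. You are right to flag the hypothesis $\|\xi_M\|_{{\rm L}^2_\lambda} < \infty$, which the paper does not verify separately in either the Ahmed--Stroock or Gong--Wang corollaries (it is effectively absorbed into ``all notation as above''). However, your heuristic that the lower bound on $\mathrm{Ric} - \operatorname{Hess}(V)$ controls the growth of $\|\xi_M\|_g$ is not valid as stated: Killing fields on complete manifolds with Ricci bounded below can still have unbounded pointwise norm (rotation on flat $\mathbb{R}^2$ already gives $\|\partial_\theta\| = r$). The exponential decay $e^V \leq C e^{-U}$ with $U$ proper is what one would actually use, but absent an a priori polynomial bound on $\|\xi_M\|$ in terms of $U$ this does not yield integrability automatically. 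Your fallback --- imposing $\|\xi_M\|_{{\rm L}^2_\lambda} < \infty$ as a standing hypothesis, in parallel with Corollary~\ref{nicecor} --- is the correct resolution and is how the paper should be read.
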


\subsection{Further examples}
There are many interesting types of spaces to which the Ahmed-Stroock and Gong-Wang results can be applied, but which 
are not covered by the more classical Theorem \ref{GpTr}. We describe a few of these here, including spaces with asymptotical 
cylindrical or asymptotically conic ends or with complete fibered boundary geometry. Amongst these are the asymptotically locally 
Euclidean (ALE) spaces, as well as the slightly more complicated ALF, ALG, and ALH spaces which arise in the classification of 
gravitational instantons. (We refer to \cite{HHM} for a description of the geometry of ALE/F/G/H spaces.) We can also handle Joyce's 
quasi-ALE (QALE) spaces \cite{Joyce} and their more flexible Riemannian analogues, the quasi-asymptotically conic (QAC) spaces 
of \cite{DegMaz}. The interest in including all of these spaces is that they seem to be intimately intertwined with symplectic geometry; 
indeed,  many of them arise via hyperK\"ahler reduction. 

The obvious idea is to let the function $U$ in Theorem \ref{AScond} depend only on the radial function $r$ on $M$.  Actually, it 
is clear that Theorem \ref{AScond} holds on all of $M$ if and only if it holds on each end (with, say, relative boundary conditions 
on the compact boundaries), so we can immediately localize to each end. We can also replace $g$ on each end by a perhaps 
simpler metric which is quasi-isometric to it.   The general feature of all these spaces is that the distance function $r$ from
a suitably chosen inner boundary has ``symbolic decay properties'', i.e., successively higher derivatives of $r$ decay
increasingly more quickly.   Writing $U = r^a$, then we require that 
\begin{align*}
{\rm i)} & \ \Delta U = a(a-1) r^{a-2} |\nabla r|^2 + a r^{a-1} \Delta r \leq C (1 + r^a) \\
{\rm ii)} & \ \epsilon U^{1+\epsilon} = \epsilon r^{a(1+\epsilon)} \leq 1 + a^2 r^{2a-2} |\nabla r|^2 \\
{\rm iii)} & \ \nabla^2 U = a r^{a-1} \nabla^2 r + a (a-1) r^{a-2} 
\mathbf{d}r^2  \geq -B. 
\end{align*}
Recalling that $|\nabla r| = 1$ holds in general, then ii) implies that $a > 2$, while i) shows
that $\Delta r$ must grow slower than $r$, and finally iii) shows that the level sets $\{r = \mathrm{const.} \}$
have some sort of convexity.  

Rather than trying to determine the most general spaces for which these restrictions hold, we explain why
they are true for the various examples listed above. For the reasons we have explained (namely, that it suffices
to consider a quasi-isometric model), we focus on the simplest models for each of these spaces.  In each
of the following, we consider one end $E$ of $M$. In general we can apply our results to manifolds $M$
which decompose into some compact piece $K$ and a finite number of ends $$E_1, \ldots, E_N,$$ each
of which is of one of the following types. 
 
\medskip

\noindent{\bf Cylindrical ends:}  Here $E = [0,\infty) \times Y$ where $(Y,h)$ is a compact
smooth Riemannian manifold, and $r$ is the linear variable on the first factor.  The metric is the product
$\mathbf{d}r^2 + h$. We obtain conditions i), ii), iii) directly since $\nabla^2 r = 0$. 

\medskip

\noindent{\bf Conic ends:} Now suppose that $E = [1,\infty) \times Y$ where $(Y,h)$ is again
a compact smooth manifold and $r \geq 1$, and the metric is given by 
$g = \mathbf{d}r^2 + r^2 h$. Then 
$$\Delta r = (n-1)/r$$ and $$\nabla^2 r \geq 0,$$ so, once again, all three conditions hold.

\medskip

\noindent{\bf Fibered boundary ends:}  This is slightly more complicated.  Suppose that $Z$ is
a compact smooth manifold which is the total space of a fibration $\pi: Z \to Y$ with fiber $F$. 
Let $h$ be a metric on $Y$ and suppose that $k$ is a symmetric $2$-tensor on $Z$ which
restricts to each fiber $F$ to be positive definite and so that $\pi^* h + k$ is positive definite on $Z$. 
Then  $$E = [1,\infty) \times Z,$$ and 
\[
g = \mathbf{d}r^2 + r^2 \pi^* h + k.
\]
In other words, this metric looks conical in the base ($Y$) directions and cylindrical in the fiber ($F$) directions.
For the specific cases of such metrics that arise in the gravitational instantons above, $Y$ is the quotient of
some $S^k$ by a finite group $\Gamma$ (typically in $\mathrm{SU}(k+1)$) and $F$ is a torus $T^\ell$.
The four-dimensional ALF/ALG/ALH spaces correspond to the cases $$(k,\ell) = (2,1),\ (1,2),\ (0,3).$$ 
The pair $(k,\ell) = (3,0)$ is precisely that of ALE spaces.

For each of these, it is a simple computation to check that $r$ has all the required properties. 

\medskip

\noindent{\bf QALE and QAC ends:}  The geometry of quasi-asymptotically conic spaces
are considerably more difficult to describe in general, and we defer to \cite{Joyce} and \cite{DegMaz} 
for detailed descriptions of the geometry.  These spaces are slightly more complicated in the sense
that while they are essentially conical as $r \to \infty$, the cross-sections $\{r = \mathrm{const.} \}$ 
are families of smooth spaces which converge to a compact stratified space. This is consistent with the
fact that QALE spaces arise as (complex analytic) resolutions of quotients $\mathbb C^n / \Gamma$. 
The basic types of estimates for $r$ and its derivatives are almost the same as above, and so conditions 
i), ii), and iii) still hold. We refer to the monograph and paper cited above for full details. 

\medskip

\noindent{\bf Bundles over QAC ends:}  The final example consists of ends $E$ which are bundles over 
QAC spaces, and with metrics which do not increase the size of the fibers as $r \to \infty$. This is in perfect
analogy to how fibered boundary metrics generalize and fiber over conic metrics.  The behavior of the
function $r$ on these spaces is similarly benign and these same three conditions hold.

\medskip

These examples have been given with very little detail (in the last two cases, barely any). The reason 
for including the, here is because they arise frequently.  In particular, the last category, i.e., bundles over
QAC (or more specifically, QALE) spaces contain the conjectural picture for the important family of 
moduli spaces of monopoles on $\mathbb R^3$.   On none of these spaces is the range of the
Laplacian on unweighted $1$-forms usually closed, but the Ahmed-Stroock conditions provide
an easily applicable way to obtain Hodge decompositions on these spaces.

\section{History of the problem: Frankel's Theorem and further results} \label{HH}

The first result concerning the relationship between the existence of fixed points and the Hamiltonian character of the action is
Frankel's celebrated theorem \cite{Frankel1959} stating that if the manifold
is compact, connected, and K\"ahler, $G=S^1$, and
the symplectic action has fixed points, then it must be Hamiltonian (note that $\mathbf{J}
\mathcal{H} \subset \mathcal{H}$ holds, see \cite[Cor 4.11, Ch. 5]{Wells2008}). Frankel's work has been very influential: 
for example, Ono \cite{Ono1984} proved the analogue theorem for compact Lefschetz manifolds and McDuff \cite[Proposition 2]{McDuff1988} 
has shown that any symplectic circle action on a compact connected symplectic 4-manifold having fixed points is Hamiltonian. 

However, this result fails in higher dimensions: McDuff \cite[Proposition 1]{McDuff1988} gave an example of a compact 
connected symplectic $6$-manifold with a symplectic circle action which has nontrivial fixed point set (equal to a union
of tori), which is nevertheless not Hamiltonian. If the $S^1$-action is semi-free (i.e., free off the fixed point set), then 
Tolman and Weitsman \cite[Theorem 1]{ToWe2000} have shown that any symplectic $S^1$-action on a compact connected 
symplectic manifold having fixed points is Hamiltonian. Feldman \cite[Theorem 1]{Feldman2001} characterized the obstruction 
for a symplectic circle action on a compact manifold to be Hamiltonian and deduced the McDuff and Tolman-Weitsman theorems 
by applying his criterion. He showed that the Todd genus of a manifold admitting a symplectic circle action with isolated
fixed points is equal either to 0, in which case the action is non-Hamiltonian, or to 1, in which
case the action is Hamiltonian. In addition, any symplectic circle action on a manifold with positive Todd genus is Hamiltonian. For additional results regarding aspherical
symplectic manifolds (i.e.  $\varint_{S^2} f ^\ast \omega = 0$ 
for any smooth map $f: S^2 \rightarrow M$) see \cite[Section 8]{KeRuTr2008} and \cite{LuOp1995}.
As of today, there are no known examples of symplectic $S^1$-actions on compact connected symplectic manifolds that are not Hamiltonian but have at least one isolated fixed point.

Less is known for higher dimensional Lie groups. Giacobbe \cite[Theorem 3.13]{Giacobbe2005} proved that a symplectic
action of a $n$-torus on a $2n$-dimensional compact connected symplectic manifold with fixed points is necessarily Hamiltonian;
see also \cite[Corollary 3.9]{DuPe2007}. If $n=2 $ this result can be checked explicitly from the classification 
of symplectic 4-manifolds with symplectic 2-torus actions given in \cite[Theorem 8.2.1]{Pelayo2010} (since cases 2--5 in the 
statement of the theorem are shown not to be Hamiltonian; the only non-K\"ahler cases are given in items 3 and 4 as proved in 
\cite[Theorem 1.1]{DuPe2010}). 

If $G$ is a Lie group with Lie algebra $\mathfrak{g}$ acting symplectically on the symplectic manifold $(M, \omega) $, 
the action is said to be \textit{cohomologically free} if the Lie algebra homomorphism
$$\xi \in \mathfrak{g} \mapsto [\mathbf{i}_{\xi_M} \omega] \in  \operatorname{H}^1(M, \mathbb{R})$$ is 
injective; $\operatorname{H}^1(M, \mathbb{R})$ is regarded as an abelian Lie algebra.  Ginzburg 
\cite[Proposition 4.2]{Ginzburg1992} showed that if a torus $\mathbb{T}^k = (S^1)^k$, $k \in \mathbb{N}$, acts symplectically,
then there exist subtori $\mathbb{T}^{k-r}$, $\mathbb{T}^r$ such that $\mathbb{T}^k =\mathbb{T}^r\times\mathbb{T}^{k-r}$,  
the $\mathbb{T}^r$-action is cohomologically free, and the $\mathbb{T}^{k-r}$-action is Hamiltonian. This homomorphism is the
obstruction to the existence of a momentum map: it vanishes if and only if the action admits a momentum map. For compact
Lie groups the previous result holds only up to coverings. If $G$ is a compact Lie group, then it is well-known that
there is a finite covering $$\mathbb{T}^k \times K \rightarrow G,$$ where $K$ is a semisimple compact Lie group. So
there is a symplectic action of $\mathbb{T}^k \times K$ on $(M, \omega)$. The $K$-action is Hamiltonian, since
$K$ is semisimple. The previous result applied to $\mathbb{T}^k$ implies that there is a finite covering 
$$\mathbb{T}^r\times (\mathbb{T}^{k-r} \times K) \rightarrow G$$ such that the $(\mathbb{T}^{k-r} \times K)$-action is
Hamiltonian and the $\mathbb{T}^r$-action is cohomologically free; this is \cite[Theorem 4.1]{Ginzburg1992}. The Lie
algebra of $\mathbb{T}^{k-r} \times K$ is $\ker\left(\xi\mapsto [\mathbf{i}_{\xi_M} \omega]\right)$. (It appears that the 
argument in \cite{Ginzburg1992} implicitly requires $M$ to satisfy the Lefschetz condition or more generally the flux conjecture to hold for $M$. 
Thus ultimately it depends on \cite{Ono2006} where the flux conjecture is established in full generality. We thank
V. Ginzburg for pointing this out.)
\\
\\
\\
\\
\emph{Acknowledgements.} We thank I. Agol, D. Auroux, J. M. Lee, D. Halpern\--Leistner, X. Tang, and A. Weinstein for many helpful
discussions. We wish to particularly thank A. Weinstein for comments on several preliminary 
versions of the paper. Work on this paper started when the first author was affiliated with the University of California, 
Berkeley (2008\--2010), and the last two authors were members of MSRI.

\newpage
\smallskip\noindent
\\
\\
Rafe Mazzeo\\
Department of Mathematics\\
Stanford University\\
Stanford, CA 94305, USA\\
{\em E\--mail}: \texttt{mazzeo@math.stanford.edu}\\
\\
\\
{\'A}lvaro Pelayo \\
Washington University in St Louis\\
Department of Mathematics \\
One Brookings Drive, Campus Box 1146\\
St Louis, MO 63130-4899, USA.\\
{\em E\--mail}: \texttt{apelayo@math.wustl.edu}\\
\\
\\
\smallskip\noindent
Tudor S. Ratiu\\
Section de Math\'ematiques, Station 8\\
 and Bernoulli Center, Station 14\\
Ecole Polytechnique F\'ed\'erale
de Lausanne\\
 CH-1015 Lausanne, Switzerland\\
{\em E\--mail}: \texttt{tudor.ratiu@epfl.ch}
\end{document}